\newtheorem{thm}{Theorem}[section]
\newtheorem{prop}[thm]{Proposition}
\newtheorem{cor}[thm]{Corollary}
\newtheorem{conj}[thm]{Conjecture}
\theoremstyle{definition}
\theoremstyle{remark}
\numberwithin{equation}{section}
\newcommand{\C}{\mathbb{C}}
\newcommand{\N}{\mathbb{N}}
\newcommand{\g}{\mathfrak{g}}
\newcommand{\frakb}{\mathfrak{b}} 
\newcommand{\frakp}{\mathfrak{p}}
\newcommand{\h}{\mathfrak{h}}
\newcommand{\frakl}{\mathfrak{l}}
\newcommand{\fraksl}{\mathfrak{sl}}
\newcommand{\frako}{\mathfrak{o}}
\newcommand{\fraksp}{\mathfrak{sp}}
\newcommand{\calB}{\mathcal{B}}
\DeclareMathOperator{\Ind}{Ind}
\numberwithin{equation}{section}
\title[Exterior powers of the reflection representation]
{Exterior powers of the reflection representation in the cohomology of Springer fibres}
\author{Anthony Henderson}
\address{School of Mathematics and Statistics\\
University of Sydney NSW 2006\\
Australia}
\email{anthonyh@maths.usyd.edu.au}
\thanks{The author's research is supported by
ARC grant DP0985184.}
\begin{document}

\begin{abstract}
Let $H^*(\calB_e)$ be the cohomology of the Springer fibre for the 
nilpotent element $e$ in a simple Lie algebra $\g$, on which the Weyl 
group $W$ acts by the Springer representation.   
Let $\Lambda^i V$ denote the $i$th exterior power of the reflection 
representation of $W$. We determine the degrees in which $\Lambda^i V$ 
occurs in the graded representation $H^*(\calB_e)$, under the assumption that
$e$ is regular in a Levi subalgebra and satisfies a certain extra condition which holds
automatically if $\g$ is of type A, B, or C.
This partially verifies a conjecture of Lehrer--Shoji, and extends
the results of Solomon in the $e=0$ case and 
Lehrer--Shoji in the $i=1$ case. The proof proceeds by showing that
$(H^*(\calB_e) \otimes \Lambda^* V)^W$ is a free exterior algebra on its
subspace $(H^*(\calB_e)\otimes V)^W$.
\end{abstract}

\maketitle

\section{Introduction}

Let $\g$ be a simple complex Lie algebra of rank $\ell$. Let $W$ denote the 
Weyl group of $\g$, and let $V$ be the reflection representation of $W$.
It is well known that the exterior powers $\Lambda^i V$, for $i=0,1,\cdots,\ell$,
are inequivalent irreducible representations of $W$, each of which is self-dual.

Let $e$ be a nilpotent element of $\g$. The Springer fibre $\calB_e$ is the variety
of Borel subalgebras of $\g$ containing $e$. Let $H^*(\calB_e)$ denote the
graded cohomology ring of $\calB_e$ with complex coefficients;
the cohomology lives solely in even degrees, so $H^*(\calB_e)$ is commutative.
We have the Springer representation of $W$ on each $H^{2j}(\calB_e)$. See
\cite{shoji}, \cite[Chapter 9]{humphreys}.

Let $s$ (depending on $e$) denote the multiplicity of the irreducible 
representation $V$ in the total representation $H^*(\calB_e)$. Let $m_1,m_2,\cdots,m_s$ be the multiset of nonnegative integers, listed in increasing order, which are the halved degrees of the occurrences
of $V$ in the graded representation $H^*(\calB_e)$. That is, we have by definition
\begin{equation} \label{exp-defn}
\sum_j \dim(H^{2j}(\calB_e)\otimes V)^W\,q^j = q^{m_1}+q^{m_2}+\cdots+q^{m_s},
\end{equation}
an equality of polynomials in the indeterminate $q$.

In the special case $e=0$, it is well known that $H^*(\calB)$ is isomorphic to
the coinvariant algebra $C^*(W)$ of $W$, that $s=\ell$, and that 
$m_1,\cdots,m_\ell$ are the exponents of $W$. More generally, if $e$ is a regular nilpotent in a Levi subalgebra of semisimple rank $r$, then it was proved by 
Lehrer and Shoji in 
\cite[Theorem 2.4]{lehrershoji} (see also \cite{spaltenstein}) that $s=\ell-r$ and that
$m_1,\cdots,m_{s}$ are the coexponents of the corresponding parabolic hyperplane
arrangement, in the sense of Orlik and Solomon. See \cite{sommerstrapa} for some other related interpretations of these coexponents. For $\g$ of classical type and
general $e$, the numbers $m_1,m_2,\cdots,m_s$ were calculated by Spaltenstein in
\cite[Propositions 1.6--1.9]{spaltenstein}.

Lehrer and Shoji conjectured that, at least in the parabolic case which they considered, 
the occurrences of each exterior power
$\Lambda^i V$ in $H^*(\calB_e)$ were also controlled in a natural way by
$m_1,m_2,\cdots,m_s$.
\begin{conj}\label{lsconj} \cite[Conjecture 8.3]{lehrershoji}
Suppose that $e$ is a regular nilpotent in a Levi subalgebra.
Then for any $i=0,1,\cdots,\ell$,
\[
\sum_j \dim(H^{2j}(\calB_e)\otimes \Lambda^i V)^W\,q^j = 
e_i(q^{m_1},q^{m_2},\cdots,q^{m_{s}}),
\]
where the right-hand side means the $i$th elementary symmetric polynomial
in $q^{m_1},q^{m_2},\cdots,q^{m_{s}}$, which is defined to
be zero if $i>s$. 
\end{conj}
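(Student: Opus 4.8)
The plan is to deduce Conjecture~\ref{lsconj} from the following structural assertion, which is the one announced in the abstract: the algebra $A:=(H^*(\calB_e)\otimes\Lambda^\bullet V)^W$, where $\Lambda^\bullet V=\bigoplus_{i=0}^{\ell}\Lambda^i V$ is the full exterior algebra of $V$, is a \emph{free exterior algebra} on its exterior-degree-one component $U:=(H^*(\calB_e)\otimes V)^W$. Here $A$ is bigraded, by the cohomological degree inherited from $H^*(\calB_e)$ and by the exterior degree $i$; as $H^*(\calB_e)$ lives in even cohomological degrees, $A$ is graded-commutative with the exterior degree modulo $2$ governing signs, so $u^2=0$ for every $u\in U$ and the inclusion $U\hookrightarrow A$ extends uniquely to a homomorphism of bigraded algebras $\phi\colon\Lambda^\bullet U\to A$. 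The claim is that $\phi$ is an isomorphism. Granting this, $\phi$ restricts in each exterior degree to an isomorphism $\Lambda^i U\xrightarrow{\ \sim\ }(H^*(\calB_e)\otimes\Lambda^i V)^W$; since $U$ has a homogeneous basis lying in cohomological degrees $2m_1,\dots,2m_s$ by \eqref{exp-defn}, the Poincar\'e polynomial of $\Lambda^i U$ in the variable $q$ recording half the cohomological degree is exactly $e_i(q^{m_1},\dots,q^{m_s})$, which is Conjecture~\ref{lsconj}.

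For the surjectivity of $\phi$, I would use the restriction map $H^*(\calB)\to H^*(\calB_e)$ coming from the closed embedding $\calB_e\hookrightarrow\calB$: it is a homomorphism of graded rings, it is surjective (the tautological Chern classes that generate $H^*(\calB_e)$ restrict from $\calB$), and it is equivariant for the Springer $W$-actions. Since $(-)^W$ is exact over $\C$, tensoring with $\Lambda^\bullet V$ and taking invariants yields a homomorphism of bigraded algebras $\psi\colon(H^*(\calB)\otimes\Lambda^\bullet V)^W\to A$ that is surjective in every bidegree. By Solomon's theorem --- the $e=0$ instance of the present statement, obtained from the freeness of the module of $W$-invariant polynomial differential forms by passing to the coinvariant algebra $C^*(W)\cong H^*(\calB)$ --- the source of $\psi$ is a free exterior algebra on its exterior-degree-one part $(H^*(\calB)\otimes V)^W$, hence is generated as an algebra in exterior degrees $0$ and $1$. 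Therefore $A$ is generated as an algebra by the image under $\psi$ of the part of exterior degree $\le 1$, that is, by $\C$ together with $\psi\bigl((H^*(\calB)\otimes V)^W\bigr)=U$; so $\phi$ is surjective. In particular $\dim(H^*(\calB_e)\otimes\Lambda^i V)^W\le\binom{s}{i}$ for all $i$ and $\dim A\le 2^s$.

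It remains to prove the reverse inequality $\dim A\ge 2^s$, which together with surjectivity of $\phi$ forces $\phi$ to be an isomorphism; Conjecture~\ref{lsconj} then follows by reading off Poincar\'e polynomials as above. Now $\dim A=\sum_i\langle H^*(\calB_e),\Lambda^i V\rangle_W$ is the total multiplicity of the exterior powers of the reflection representation in the \emph{ungraded} Springer module $H^*(\calB_e)$, so what is needed is a sufficiently precise description of that module. If $H^*(\calB_e)$ is isomorphic as an ungraded $W$-module to $\Ind_{W_{\frakl}}^{W}\mathbf 1$, where $\frakl$ is the Levi subalgebra in which $e$ is regular and $W_{\frakl}$ its Weyl group, then Frobenius reciprocity yields
\[
\sum_i\langle H^*(\calB_e),\Lambda^i V\rangle_W=\dim(\Lambda^\bullet V)^{W_{\frakl}}=\dim\Lambda^\bullet\!\bigl(V^{W_{\frakl}}\bigr)=2^{\dim V^{W_{\frakl}}},
\]
the middle equality because $(\Lambda^\bullet V)^{W_{\frakl}}$ picks up nothing beyond $V^{W_{\frakl}}$: on the orthogonal complement of $V^{W_{\frakl}}$ the group $W_{\frakl}$ acts by its reflection representation, and over each simple factor the positive exterior powers of the reflection representation are nontrivial irreducibles. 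Since also $\dim V^{W_{\frakl}}=\langle\Ind_{W_{\frakl}}^{W}\mathbf 1,V\rangle_W=\langle H^*(\calB_e),V\rangle_W=s$ --- consistent with the value $s=\ell-r$ established in \cite[Theorem~2.4]{lehrershoji} --- this gives $\dim A=2^s$, as required. I expect this last step to be the main obstacle: pinning down the ungraded $W$-module $H^*(\calB_e)$ as an honestly induced module --- equivalently, showing that the $W$-representations attached by the Springer correspondence to the nontrivial local systems on the orbit of $e$ do not contribute --- is not automatic once $\g$ leaves types A, B, and C, and it is precisely here that an extra hypothesis on $e$, automatic in those types, must be imposed. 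Note finally that when $\phi$ is an isomorphism one has proved more than the numerical identity, namely the structural statement that $(H^*(\calB_e)\otimes\Lambda^\bullet V)^W$ is the free exterior algebra on $(H^*(\calB_e)\otimes V)^W$.
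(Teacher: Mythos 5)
Your overall strategy is the paper's: reduce the conjecture to the statement that $(H^*(\calB_e)\otimes\Lambda^\bullet V)^W$ is a free exterior algebra on $U=(H^*(\calB_e)\otimes V)^W$, and get the dimension count $\dim A=2^s$ from Lusztig's induction theorem plus Frobenius reciprocity and $(\Lambda^\bullet V)^{W_J}=\Lambda^\bullet(V^{W_J})$. That half is correct and is exactly Corollary 2.2 of the paper; note also that you have the difficulty located in the wrong place, since Lusztig's theorem gives $H^*(\calB_e)\cong\Ind_{W_J}^W(\C)$ for \emph{every} $e$ that is regular in a Levi, in every type, so the ``honestly induced module'' step is not where the type restriction enters.

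The genuine gap is your surjectivity argument. The restriction map $H^*(\calB)\to H^*(\calB_e)$ is \emph{not} surjective in general: its image is contained in the invariants $H^*(\calB_e)^{A(e)}$ for the component group of the centralizer of $e$, and $H^*(\calB_e)$ is not generated by Chern classes of tautological bundles outside type A. Worse, even the induced map $(H^*(\calB)\otimes V)^W\to(H^*(\calB_e)\otimes V)^W$ can fail to be surjective for $e$ regular in a Levi: for $\g$ of type D$_4$ and $e$ of Jordan type $(3^21^2)$ one has $m_2=2$, so $(H^{4}(\calB_e)\otimes V)^W$ is one-dimensional, while $(H^{4}(\calB)\otimes V)^W$ is a quotient of $(S^2V\otimes V)^W=0$ because the exponents of D$_4$ are $1,3,3,5$; so nothing maps onto that occurrence of $V$. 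This is precisely why the theorem carries the extra hypotheses (1) and (2) on a parabolic subgroup $W_K$, which fail in that D$_4$ example. The paper's route around this is the opposite of yours: having matched dimensions via Lusztig, it proves the map $\psi:\Lambda^\bullet U\to A$ is \emph{injective}, which for an exterior algebra reduces to showing the image of the top exterior power $\Lambda^s U$ is nonzero. This is done by exhibiting explicit generators $\sum_j\varphi(\partial f_i/\partial v_j)\otimes v_j$ coming from the invariants $f_i$ of condition (1), computing that the relevant $s\times s$ Jacobian determinant is a nonzero multiple of $\pi_K=\prod_{\beta\in\Phi_K^+}\beta$, and using condition (2) to produce an embedding $\calB^K\hookrightarrow\calB_e$ under which $\varphi(\pi_K)$ restricts to a generator of the top cohomology of $\calB^K$, hence is nonzero. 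Without some substitute for this step, your argument does not close.
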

\noindent
The $e=0$ case of this conjecture had already been proved by Solomon in 
\cite{solomon}; indeed, he proved the stronger statement that the algebra 
$(C^*(W) \otimes \Lambda^*V)^W$
is a free exterior algebra on $(C^*(W) \otimes V)^W$. 

In Section 2, we prove the following generalization of Solomon's result, which
implies various cases of Conjecture \ref{lsconj}.
\begin{thm} \label{exteriorthm}
Suppose that $e$ is regular in a Levi subalgebra of $\g$, 
and define $s$ and $m_1,\cdots,m_s$
as above. Also suppose that there is a parabolic
subgroup $W_K$ of $W$ such that the following two conditions hold.
\begin{enumerate}
\item There exist invariant polynomials
$f_1,f_2,\cdots,f_{s}\in (S^* V)^W$, homogeneous
of degrees $m_1+1,m_2+1,\cdots,m_{s}+1$, whose restrictions to the reflection
representation $V_K$ of $W_K$ form
a set of fundamental invariants for $W_K$.
\item The nilpotent orbit of $e$ intersects the nilradical of the parabolic subalgebra
$\frakp_{K}$ associated to $W_K$.
\end{enumerate}
\textup{(}See Section 2 for the definitions of $V_K$ and $\frakp_{K}$.\textup{)}  
Then the algebra $(H^*(\calB_e) \otimes \Lambda^* V)^W$ is a free exterior algebra on 
$(H^*(\calB_e)\otimes V)^W$. More precisely,
the natural algebra homomorphism
\[ 
\psi:\Lambda^*((H^*(\calB_e)\otimes V)^W)\to
(H^*(\calB_e) \otimes \Lambda^* V)^W
\]
is an isomorphism.
\end{thm}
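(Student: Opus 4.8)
The plan is to play Solomon's theorem for $W$ off against Solomon's theorem for $W_K$: the former gives surjectivity of $\psi$ almost for free, and the latter, fed by the two hypotheses, gives injectivity. I recall that $H^*(\calB_e)$ is, compatibly with the $W$-actions, a graded quotient algebra of $H^*(\calB)\cong C^*(W)=S^*V/(S^*V)^W_+$, and that, writing $d\colon S^*V\to S^*V\otimes V$ for the de Rham differential, Solomon's theorem asserts that $(S^*V\otimes\Lambda^*V)^W$ is the free exterior algebra over $(S^*V)^W$ on $df_1,\dots,df_\ell$ for any fundamental invariants $f_1,\dots,f_\ell$ of $W$, and hence that $(C^*(W)\otimes\Lambda^*V)^W$ is the free exterior algebra over $\C$ on the reductions $\overline{df_1},\dots,\overline{df_\ell}$, in particular generated as an algebra by $(C^*(W)\otimes V)^W$. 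Surjectivity of $\psi$ is then immediate: since taking $W$-invariants is exact in characteristic zero, the quotient map $C^*(W)\twoheadrightarrow H^*(\calB_e)$ induces an exterior-degree-preserving algebra surjection $(C^*(W)\otimes\Lambda^*V)^W\twoheadrightarrow(H^*(\calB_e)\otimes\Lambda^*V)^W$, so the target is generated by the image of $(C^*(W)\otimes V)^W$, which lies in $(H^*(\calB_e)\otimes V)^W$. As $\dim_\C(H^*(\calB_e)\otimes V)^W=s$ by \cite[Theorem 2.4]{lehrershoji}, the domain of $\psi$ has dimension $2^s$, and it suffices to produce a linear map from $(H^*(\calB_e)\otimes\Lambda^*V)^W$ onto a space of dimension $2^s$ through which $\psi$ factors.

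This is where the parabolic enters. By hypothesis (2) I may, after replacing $e$ by a conjugate, assume $e\in\mathfrak{n}_K$, the nilradical of $\frakp_K$; then $\frakb'\oplus\mathfrak{n}_K$ is a Borel subalgebra of $\g$ containing $e$ for every Borel subalgebra $\frakb'$ of the Levi $\frakl_K$ of $\frakp_K$, so the flag variety $\calB^{\frakl_K}$ of $\frakl_K$ embeds as a closed subvariety of $\calB_e$, compatibly with the inclusions of both into $\calB$. Pulling back cohomology gives a $W_K$-equivariant homomorphism of $C^*(W)$-algebras $\pi\colon H^*(\calB_e)\to H^*(\calB^{\frakl_K})=C^*(W_K)$; since both sides are cyclic $C^*(W)$-modules, $\pi$ is the evident quotient map, so the composite $S^*V\to H^*(\calB_e)\xrightarrow{\pi}C^*(W_K)$ is restriction of polynomials to $V_K$ followed by reduction modulo $(S^*V_K)^{W_K}_+$. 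Tensoring $\pi$ with the natural $W_K$-equivariant surjection $\Lambda^*V\twoheadrightarrow\Lambda^*V_K$ and passing to $W_K$-invariants yields an algebra homomorphism
\[
\tilde\theta\colon(H^*(\calB_e)\otimes\Lambda^*V)^W\hookrightarrow(H^*(\calB_e)\otimes\Lambda^*V)^{W_K}\longrightarrow(C^*(W_K)\otimes\Lambda^*V_K)^{W_K}.
\]

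Now apply Solomon's theorem to $W_K$ acting on $V_K$. By hypothesis (1) the restrictions $f_i|_{V_K}$ are fundamental invariants of $W_K$, so $(C^*(W_K)\otimes\Lambda^*V_K)^{W_K}$ is the free exterior algebra on the $s$ classes $\overline{d(f_i|_{V_K})}$ and has dimension $2^s$. Each $f_i\in(S^*V)^W$ is homogeneous of degree $m_i+1$, so $df_i$ descends to a class $\overline{df_i}\in(H^*(\calB_e)\otimes V)^W$ lying in cohomological degree $2m_i$, and since $d$ commutes with restriction the description of $\pi$ above gives $\tilde\theta(\overline{df_i})=\overline{d(f_i|_{V_K})}$. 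Hence $\tilde\theta\circ\psi$ maps $\Lambda^*((H^*(\calB_e)\otimes V)^W)$ onto the free exterior algebra on the $\overline{d(f_i|_{V_K})}$; as source and target both have dimension $2^s$, this composite is an isomorphism, so $\psi$ is injective, hence an isomorphism. Reading off the bigrading --- cohomological degree against exterior degree, with $\overline{df_i}$ in cohomological degree $2m_i$ --- then recovers Conjecture~\ref{lsconj} for such $e$.

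The step I expect to demand the most care --- and the one the hypotheses are shaped to permit --- is the construction of $\pi$ and the verification that it is $W_K$-equivariant for the Springer actions and coincides with the algebraic restriction-to-$V_K$ map; this rests on the facts that the Springer $W$-action on $H^*(\calB_e)$ is inherited from $H^*(\calB)$ and that the flag-variety embeddings above are mutually compatible. Verifying hypotheses (1) and (2) themselves, and understanding why they can fail in type D, is a separate matter, logically independent of the argument sketched here.
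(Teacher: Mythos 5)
Your injectivity argument is sound and is, at bottom, the paper's own: the paper shows that the Jacobian determinant $\Delta$ of $f_1,\dots,f_s$ with respect to coordinates on $V_K$ is a nonzero multiple of $\pi_K=\prod_{\beta\in\Phi_K^+}\beta$ and that $\varphi(\pi_K)\neq 0$ by restricting to the embedded flag variety $\calB^K\subset\calB_e$; your route through Solomon's theorem for $W_K$ applied to $(C^*(W_K)\otimes\Lambda^*V_K)^{W_K}$ is the same computation in different clothing (Solomon's theorem for $W_K$ is itself proved via that Jacobian, i.e.\ Steinberg's theorem). Your identification of the composite $S^*V\to H^*(\calB_e)\to H^*(\calB^K)=C^*(W_K)$ with restriction to $V_K$ is also what the paper uses, citing Hotta--Springer.

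The genuine gap is in your surjectivity step. You assert that $H^*(\calB_e)$ is ``a graded quotient algebra of $H^*(\calB)\cong C^*(W)$,'' and everything downstream (the surjection $(C^*(W)\otimes\Lambda^*V)^W\twoheadrightarrow(H^*(\calB_e)\otimes\Lambda^*V)^W$, hence generation of the target by exterior degree one) rests on that. But the restriction map $H^*(\calB)\to H^*(\calB_e)$ is \emph{not} surjective in general: its image lies in the invariants $H^*(\calB_e)^{A(e)}$ for the component group $A(e)$ of the centralizer of $e$, and the paper explicitly flags this failure. Nothing in hypotheses (1) and (2) gives you surjectivity of $\varphi$ a priori; indeed the statement that $(C^*(W)\otimes\Lambda^iV)^W\to(H^*(\calB_e)\otimes\Lambda^iV)^W$ is onto for all $i$ is essentially a consequence of the theorem you are trying to prove, so invoking it here is circular. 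The paper closes this hole differently: by Lusztig's induction theorem, $H^*(\calB_e)\cong\Ind_{W_J}^W(\C)$ as an ungraded $W$-module when $e$ is regular in the Levi $\frakl_J$, so Frobenius reciprocity gives $\dim(H^*(\calB_e)\otimes\Lambda^*V)^W=\dim(\Lambda^*V)^{W_J}=\dim\Lambda^*(V^J)=2^s$; since your (correct) injectivity argument shows the $2^s$-dimensional domain of $\psi$ injects, surjectivity follows by dimension count. That induction-theorem input is the missing ingredient in your proposal, and without it (or some substitute argument for the dimension of the codomain) the proof is incomplete.
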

\noindent
Here the domain and codomain of $\psi$ are $(\N\times\N)$-graded algebras over $\C$, 
where the $(i,j)$-components are
\[
\Lambda^i((H^{2j}(\calB_e)\otimes V)^W)\text{ and }
(H^{2j}(\calB_e) \otimes \Lambda^i V)^W
\]
respectively, and in both cases the algebra multiplication 
is graded-commutative with respect to the $\N$-grading labelled by $i$; 
the homomorphism $\psi$ is induced by
the inclusion of the subspace $(H^*(\calB_e)\otimes V)^W$ in 
$(H^*(\calB_e) \otimes \Lambda^* V)^W$. Since the graded degrees of this subspace are
$(1,m_1),(1,m_2),\cdots,(1,m_s)$, the statement that $\psi$ is an isomorphism 
implies that
\begin{equation*}
\sum_{i,j} \dim(H^{2j}(\calB_e)\otimes \Lambda^i V)^W\,t^i q^j = 
(1+tq^{m_1})(1+tq^{m_2})\cdots(1+tq^{m_s}),
\end{equation*}
which is equivalent to Conjecture \ref{lsconj}.

To illustrate the scope of the assumptions in Theorem \ref{exteriorthm},
we prove in Section 3 the following results about types A--C.
\begin{prop} \label{typeaprop}
If $\g$ is of type A, then the assumptions of Theorem \ref{exteriorthm} hold for any $e$.
\end{prop}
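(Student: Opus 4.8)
The plan is as follows. With $\g=\fraksl_n$ (so $\ell=n-1$), $W=S_n$ and $V$ the reflection representation, nilpotent elements are classified up to conjugacy by their Jordan type, a partition $\lambda=(\lambda_1\ge\cdots\ge\lambda_k>0)$ of $n$. First I would recall the standard fact that in type A every nilpotent $e$ is regular in a Levi subalgebra: if $e$ has Jordan type $\lambda$ with exactly $k$ parts, then $e$ is a regular nilpotent of the Levi subalgebra $\frakl=\fraksl_n\cap(\gl_{\lambda_1}\oplus\cdots\oplus\gl_{\lambda_k})$, whose semisimple rank is $r=\sum_i(\lambda_i-1)=n-k$, so that $s=\ell-r=k-1$. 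Next I would pin down the numbers $m_1,\dots,m_s$ attached to $\calB_e$: by \cite[Theorem~2.4]{lehrershoji} they are the coexponents of the parabolic hyperplane arrangement determined by $W_\frakl=S_{\lambda_1}\times\cdots\times S_{\lambda_k}$, namely the restriction of the braid arrangement of $S_n$ to the fixed subspace $V^{W_\frakl}$. This restricted arrangement is linearly isomorphic to the reflection arrangement of $S_k$, a Coxeter arrangement, so its coexponents coincide with its exponents $1,2,\dots,k-1$. (Alternatively one may simply quote Spaltenstein's explicit computation \cite[Prop.~1.6]{spaltenstein}.) Thus $m_i=i$, and the degrees $m_i+1$ run through $2,3,\dots,k$.

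With this in place I would take $W_K$ to be the standard parabolic subgroup of $W$ generated by the first $k-1$ simple reflections, so that $W_K\cong S_k$ permutes the first $k$ coordinates and fixes the last $n-k$; its reflection representation is $V_K=\{(x_1,\dots,x_k,0,\dots,0)\in V\}$, and $(S^*V_K)^{W_K}$ is freely generated by the power sums $x_1^d+\cdots+x_k^d$ for $d=2,\dots,k$. To check condition~(1) of Theorem~\ref{exteriorthm} I would let $f_i\in(S^*V)^W$ be the $S_n$-power sum $x_1^{i+1}+\cdots+x_n^{i+1}$, which is homogeneous of degree $i+1=m_i+1$; since $x_{k+1},\dots,x_n$ vanish on $V_K$, the restriction of $f_i$ to $V_K$ is $x_1^{i+1}+\cdots+x_k^{i+1}$, so $f_1,\dots,f_{k-1}$ restrict to a complete set of fundamental invariants of $W_K$. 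To check condition~(2) I would note that $\frakp_K$ is the standard parabolic subalgebra with Levi block sizes $(k,1,\dots,1)$, whose Richardson orbit — the one whose intersection with the nilradical of $\frakp_K$ is dense — has Jordan type the conjugate of the partition $(k,1,\dots,1)$, namely $(n-k+1,1^{k-1})$; hence the nilpotent orbit of $e$ meets the nilradical of $\frakp_K$ if and only if $\lambda\trianglelefteq(n-k+1,1^{k-1})$ in the dominance order. Since $\lambda$ has exactly $k$ parts, its $j$th partial sum is $\lambda_1+\cdots+\lambda_j=n-(\lambda_{j+1}+\cdots+\lambda_k)\le n-(k-j)$, which is precisely the $j$th partial sum of $(n-k+1,1^{k-1})$; so the required domination holds, and condition~(2) is verified. (The extreme cases $k=1$ and $k=n$ recover $W_K=\{1\}$ with $e$ regular, and $W_K=W$ with $e=0$, the case already handled by Solomon.)

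I do not expect a genuine obstacle: once $W_K$ has been chosen correctly the verification is essentially bookkeeping with partitions. The two points that need care are the identification $m_i=i$ — for which I would rely on \cite{lehrershoji} or \cite{spaltenstein} rather than recomputing coexponents from scratch — and the description of the Richardson orbit of $\frakp_K$ together with the partial-sum inequality, both of which are standard facts about nilpotent orbits in type A. Granting conditions~(1) and~(2), Theorem~\ref{exteriorthm} then applies to every $e$ in type A, which is the assertion of the proposition.
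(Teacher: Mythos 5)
Your argument is correct and follows essentially the same route as the paper: the same parabolic subgroup $W_K\cong S_k$ (up to relabelling which $k$ coordinates it permutes), the same identification $m_i=i$, and invariants of degrees $2,\dots,k$ restricting to fundamental invariants of $S_k$ --- you use power sums where the paper uses elementary symmetric polynomials, which makes no difference. The one genuine divergence is in the verification of condition (2): the paper exhibits the required partial flag explicitly, taking $U_r=e(\C^{\ell+1})$ and any $e$-stable complete flag inside it, whereas you invoke the facts that the saturation $G\cdot\mathfrak{n}_{\frakp_K}$ of the nilradical is the closure of the Richardson orbit, of Jordan type $(n-k+1,1^{k-1})$, and that orbit closure in type A is the dominance order, and then check the partial-sum inequalities $\lambda_1+\cdots+\lambda_j\leq n-k+j$. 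Both are standard and correct; the paper's construction is more elementary and self-contained, while yours makes transparent that condition (2) is exactly the closure condition $\lambda\trianglelefteq(n-k+1,1^{k-1})$, which holds automatically because $\lambda$ has $k$ parts. I see no gaps.
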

\begin{prop} \label{typebprop}
If $\g$ is of type B, then the assumptions of Theorem \ref{exteriorthm} hold for any $e$
which is regular in a Levi subalgebra.
\end{prop}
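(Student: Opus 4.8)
The plan is to run the same argument as in type A, now with $\g=\mathfrak{so}_{2\ell+1}$ and $W=W(B_\ell)$ acting on $V=\C^\ell$ with coordinates $x_1,\dots,x_\ell$. Since $e$ is regular in a Levi subalgebra, that Levi is $W$-conjugate to $\frakl=\gl_{a_1}\oplus\cdots\oplus\gl_{a_p}\oplus\mathfrak{so}_{2b+1}$ with $a_1+\cdots+a_p+b=\ell$; then $O_e$ has Jordan type $\mu=[\,2b+1,a_1,a_1,\dots,a_p,a_p\,]$, a partition of $2\ell+1$ in which $2b+1$ is the unique part of odd multiplicity so that it is a genuine $\mathfrak{so}_{2\ell+1}$-type, and $W_\frakl\cong S_{a_1}\times\cdots\times S_{a_p}\times W(B_b)$, of semisimple rank $\ell-p$. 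The first step is to record, via \cite[Theorem 2.4]{lehrershoji} --- or equivalently by observing that the restriction of the Coxeter arrangement of $W(B_\ell)$ to the fixed space $V^{W_\frakl}\cong\C^p$ is exactly the Coxeter arrangement of type $B_p$ --- that $s=p$ and that $m_1,\dots,m_p$ are the coexponents of the type-$B_p$ arrangement, i.e.\ $m_i=2i-1$, so $m_i+1=2i$.

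With the $m_i$ known, take $W_K$ to be the standard parabolic subgroup of type $B_p$ generated by the last $p$ simple reflections of $W(B_\ell)$; then $V_K=\langle\varepsilon_{\ell-p+1},\dots,\varepsilon_\ell\rangle$, and $\frakp_K$ has Levi subalgebra $\gl_1^{\oplus(\ell-p)}\oplus\mathfrak{so}_{2p+1}$ with nilradical $\mathfrak{n}_K$. Let $f_i\in(S^*V)^W$ be the $i$th elementary symmetric polynomial in $x_1^2,\dots,x_\ell^2$, homogeneous of degree $2i=m_i+1$. Restricting to $V_K$ (setting $x_1=\cdots=x_{\ell-p}=0$) sends $f_i$ to the $i$th elementary symmetric polynomial in $x_{\ell-p+1}^2,\dots,x_\ell^2$; for $i=1,\dots,p$ these are precisely the fundamental invariants of $W(B_p)$ on $V_K$, so condition (1) of Theorem \ref{exteriorthm} holds.

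For condition (2), a direct computation with the isotropic flag defining $\frakp_K$ (or a dimension count) shows that the Richardson orbit $\Ind_{\frakp_K}^{\g}(0)$ has Jordan type $[\,2(\ell-p)+1,\,1^{2p}\,]$. Since $G\times^{P_K}\mathfrak{n}_K\to\g$ is proper, $G\cdot\mathfrak{n}_K$ is closed; hence a nilpotent orbit meets $\mathfrak{n}_K$ if and only if it lies in $\overline{\Ind_{\frakp_K}^{\g}(0)}$, i.e.\ precedes $[\,2(\ell-p)+1,1^{2p}\,]$ in the dominance order. But $\mu$ has exactly $2p+1$ parts, each $\ge1$, so $\mu_1+\cdots+\mu_j\le(2\ell+1)-(2p+1-j)=(2\ell-2p+1)+(j-1)$ for every $j$, giving $\mu\le[\,2(\ell-p)+1,1^{2p}\,]$. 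Therefore $O_e$ meets $\mathfrak{n}_K$, condition (2) holds, and Theorem \ref{exteriorthm} applies. One can also avoid the properness argument by writing an explicit representative of $O_e$ in $\mathfrak{n}_K$, assembling the Jordan strings of $\mu$ from the flag $F_1\subset\cdots\subset F_{\ell-p}$ defining $\frakp_K$ and from the $\mathfrak{so}_{2p+1}$-space, the single odd part $2b+1$ being accommodated by the odd-dimensional $\mathfrak{so}_{2p+1}$-space and the paired parts $a_i,a_i$ by the $\gl_1$-factors.

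I expect the main obstacle to be the first step: pinning down the $m_i$ and, with them, the fact that $W_K$ can --- and in fact must --- be taken to be the type-$B_p$ parabolic. This is where the type-specific input enters (the Lehrer--Shoji formula, together with the identification of the relevant parabolic arrangement as being of type $B_p$); once that is in place, conditions (1) and (2) follow from the computations above, the only point needing care in (2) being the identification of $\Ind_{\frakp_K}^{\g}(0)$.
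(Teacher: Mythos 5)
Your proof is correct, and while condition (1) is handled exactly as in the paper (same parabolic $K$ of type $B_p$, same elementary symmetric polynomials in the $x_j^2$, same identification of the restricted arrangement as type $B_p$ giving $m_i=2i-1$), your treatment of condition (2) is genuinely different. The paper works entirely with an explicit linear-algebra construction: it decomposes $\C^{2\ell+1}=U\oplus U'\oplus U''$ according to the paired Jordan blocks $\lambda_i,\lambda_i$ and the odd block $2m+1$, sets $U_r=e(U)\oplus U'''$ with $U'''$ the $e$-stable $m$-dimensional subspace of $U''$, verifies $e(U_r^\perp)=U_r$ by hand, and completes to an $e$-stable isotropic flag lying in the nilradical of $\frakp_K$. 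You instead invoke the general machinery: $G\cdot\mathfrak{n}_K$ is closed (properness of $G\times^{P_K}\mathfrak{n}_K\to\g$), hence equals the closure of the Richardson orbit, which you compute to be $[\,2(\ell-p)+1,1^{2p}\,]$, and then you verify the dominance inequality $\mu\leq[\,2(\ell-p)+1,1^{2p}\,]$ by the clean observation that $\mu$ has exactly $2p+1$ parts, each at least $1$. This does require the Gerstenhaber--Hesselink identification of the closure order with the dominance order on type-B partitions, which you should cite explicitly, but given that input your argument is shorter, more conceptual, and makes transparent why the parity bookkeeping in the paper's flag construction works out; the paper's route is more elementary and self-contained. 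Both are valid, and your dominance computation is correct (the $j$th partial sum of $\mu$ is at most $(2\ell+1)-(2p+1-j)=(2(\ell-p)+1)+(j-1)$, which is the $j$th partial sum of the Richardson partition).
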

\begin{prop} \label{typecprop}
If $\g$ is of type C, then the assumptions of Theorem \ref{exteriorthm} hold for any $e$
which is regular in a Levi subalgebra.
\end{prop}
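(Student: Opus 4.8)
Throughout take $\g=\fraksp_{2\ell}$, so that $W=W(C_\ell)$ acts on $V=\C^\ell$ with coordinates $x_1,\dots,x_\ell$ by signed permutations. The plan is to make $e$, the numbers $m_i$, and the subgroup $W_K$ completely explicit, and then verify conditions (1) and (2) by hand. Up to conjugacy a Levi subalgebra is $\frakl\cong\gl_{a_1}\times\cdots\times\gl_{a_p}\times\fraksp_{2b}$ with $a_1+\cdots+a_p+b=\ell$; its Weyl group is the parabolic $W_J=S_{a_1}\times\cdots\times S_{a_p}\times W(C_b)$, of rank $r=\ell-p$, and the regular nilpotent $e$ of $\frakl$ has Jordan type $\lambda(e)=(a_1,a_1,\dots,a_p,a_p,2b)$ (omit the last part if $b=0$). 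Thus $s=\ell-r=p$. To pin down the $m_i$ I would restrict the $C_\ell$ reflection arrangement to the fixed space $V^{W_J}$: identifying $V^{W_J}\cong\C^p$ by recording the common value $y_i$ of the coordinates in each $S_{a_i}$-block (the $W(C_b)$-block being forced to zero), the surviving hyperplanes are exactly $y_i=0$ and $y_i=\pm y_j$, i.e.\ the rank-$p$ hyperoctahedral arrangement, whose coexponents are its exponents $1,3,\dots,2p-1$. Hence $m_i=2i-1$; this matches Spaltenstein's tables \cite{spaltenstein}.

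For condition (1) I would take $W_K=W(C_p)$, the standard parabolic generated by $s_{\alpha_{\ell-p+1}},\dots,s_{\alpha_\ell}$, so that $V_K=\{x_1=\cdots=x_{\ell-p}=0\}$, and put $f_i=\sum_{m=1}^{\ell}x_m^{2i}\in(S^*V)^W$ for $i=1,\dots,p$. Each $f_i$ is homogeneous of degree $2i=m_i+1$, and its restriction to $V_K$ is the power sum $\sum_{m=\ell-p+1}^{\ell}x_m^{2i}$; since the power sums of degrees $2,4,\dots,2p$ are algebraically independent they form a fundamental system of invariants for $W(C_p)$, giving condition (1).

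For condition (2), the parabolic $\frakp_K$ attached to $W_K$ has Levi $\gl_1^{\,\ell-p}\times\fraksp_{2p}$ and stabilises an isotropic flag of lines $L_1\subset\cdots\subset L_{\ell-p}$, with $\dim\mathfrak{n}_K=\ell^2-p^2$. Since $G\cdot\mathfrak{n}_K=\overline{\mathcal{O}_{\mathrm{Rich}}}$ and orbit closures in type C are governed by the dominance order on partitions, condition (2) is equivalent to $\lambda(e)\preceq\lambda_{\mathrm{Rich}}$. The Richardson orbit is the symplectic collapse of the transpose $(2\ell-2p+1,1^{2p-1})$ of the flag-type partition $(2p,1^{2\ell-2p})$, namely $\lambda_{\mathrm{Rich}}=(2\ell-2p,2,1^{2p-2})$ (the dimension count confirms this). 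It then remains to check $\lambda(e)\preceq(2\ell-2p,2,1^{2p-2})$: for each $j\ge2$ the sum of the parts of $\lambda_{\mathrm{Rich}}$ beyond the $j$-th equals $2p-j$, while $\lambda(e)$ has at least $2p$ parts each $\ge1$ and so its corresponding tail is at least $2p-j$; the remaining case $j=1$, namely $\max(a_1,2b)\le 2\ell-2p=2\sum_i(a_i-1)+2b$, is immediate from $\ell=\sum_i a_i+b$. The degenerate cases are trivial: $p=0$ gives $\frakp_K=\frakb$ and $e\in\mathfrak{n}_K=\mathfrak{n}$, while $p=\ell$ gives $\frakp_K=\g$ and $\mathfrak{n}_K=0=e$. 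The main obstacle is condition (2), and within it the only non-formal input is the identification of the Richardson orbit of $\frakp_K$; once that is in hand the required dominance collapses to the elementary counting above.
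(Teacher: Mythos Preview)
Your argument is correct. For condition~(1) you make the same choice of $K$ as the paper (the type~$C_s$ tail of the diagram) and an equivalent choice of invariants (power sums in the $x_m^2$ in place of the paper's elementary symmetric functions in the $x_m^2$), so that part is essentially identical. The genuine difference is in condition~(2). The paper never mentions Richardson orbits or the closure order: it simply writes down, from the Jordan block decomposition $\C^{2\ell}=U\oplus U'\oplus U''$ of the natural representation, an explicit isotropic $r$-dimensional subspace $U_r=e(U)\oplus U'''$ (with $U'''$ the unique $e$-stable middle-dimensional subspace of $U''$) satisfying $e(U_r^\perp)\subseteq U_r$, and then takes any $e$-stable complete flag inside it. Your route instead translates condition~(2) into the inclusion $G\cdot e\subseteq\overline{\mathcal O_{\mathrm{Rich}}}$, computes $\lambda_{\mathrm{Rich}}=(2\ell-2p,2,1^{2p-2})$ via the symplectic collapse recipe, and verifies the dominance $\lambda(e)\preceq\lambda_{\mathrm{Rich}}$ combinatorially. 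Both are valid; the paper's argument is more self-contained (pure linear algebra, no appeal to the theory of induced orbits or to the Gerstenhaber--Hesselink description of closures), while yours is more structural and makes transparent \emph{why} the chosen $K$ works, at the cost of importing those standard but nontrivial facts. One minor remark: your statement that the tail of $\lambda_{\mathrm{Rich}}$ beyond the $j$th part ``equals $2p-j$'' should read $\max(0,2p-j)$, but the inequality you need is unaffected.
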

\noindent
Hence Conjecture \ref{lsconj} is proved in types A--C. By contrast, suppose that 
$\g$ is of type
D$_4$ and $e$ has Jordan type $(3^21^2)$ in the natural representation on $\C^8$.
Then $e$ is regular in a Levi subalgebra of type A$_2$, 
but we have $m_2=2$ and there are no 
$W$-invariant polynomials of degree $3$, so condition (1) of Theorem \ref{exteriorthm} 
cannot be satisfied.

\medskip

\textbf{Acknowledgements and historical comments.}
The main result of this paper, Theorem \ref{exteriorthm}, dates from 1997, 
when I was a student at the University of Sydney, supervised by Gus Lehrer. 
As the reader will observe, it is indebted to Lehrer's ideas, and I thank him for
his help and encouragement.
I did not publish this result at the time, since it did not prove the
motivating Conjecture \ref{lsconj} in general. Recently Eric Sommers has completed 
the proof of Conjecture \ref{lsconj} by a different method, and also removed
the assumption that $e$ is regular in a Levi subalgebra. I thank him for his
interest in my old result, and for the suggestion that it be published.
\section{Proof of Theorem \ref{exteriorthm}}
Continue the notation of the introduction. Let $\h\subset\frakb$ be a Cartan subalgebra
and Borel subalgebra of $\g$, and let $\Pi\subset\Phi^+\subset\Phi$ 
be the corresponding set of simple roots, positive roots, and roots.
We identify $W$ with the subgroup of $GL(\h)$ generated
by the simple reflections $s_\alpha$ for $\alpha\in\Pi$; the 
reflection representation $V$ of $W$ is merely $\h$ itself.

Let $J\subseteq\Pi$ be a subset of size $r$, and set $s=\ell-r$. We have a Levi subalgebra
$\frakl_J$ and parabolic subalgebra $\frakp_J$ containing $\h$ and $\frakb$
respectively, a parabolic subsystem $\Phi_J$ of $\Phi$,
and a parabolic subgroup $W_J$ of $W$. Define
\[ V^J=\bigcap_{\alpha\in J}\ker(\alpha)
=V^{W_J}. \]
We write $V_J$ for the unique $W_{J}$-invariant complement to
$V^{J}$ in $V$, which is the reflection representation of $W_J$.
Note that $\dim V^J=s$ and $\dim V_J=r$.
Let $\mathcal{A}^J$ and $\mathcal{A}_{J}$ be the hyperplane arrangements in $V^J$
and $V_J$ respectively induced by the root
hyperplanes in $V$.

We assume for the remainder of the section that
\begin{equation} \label{ass1}
\text{$e$ is parabolic of type $J$,}
\end{equation} 
meaning that the orbit of $e$ contains the 
regular nilpotent elements of $\frakl_J$. As mentioned in the introduction, Lehrer and
Shoji proved in this case that
\begin{equation}
\sum_j \dim(H^{2j}(\calB_e)\otimes V)^W\,q^j = q^{m_1}+q^{m_2}+\cdots+q^{m_{s}},
\end{equation}
where $m_1,\cdots,m_{s}$ are the coexponents of the arrangement $\mathcal{A}^J$.
(See \cite[Theorem 2.4]{lehrershoji}; the missing case in type D
is covered by the results of Spaltenstein \cite{spaltenstein}.) 

An important special feature of the parabolic case is Lusztig's Induction Theorem
for Springer representations.
\begin{thm} \label{lusztig}
\cite{lusztig}
Under the assumption \eqref{ass1}, the representation of $W$ on
$H^*(\calB_e)$, neglecting the grading, 
is isomorphic to the induction $\Ind_{W_J}^W(\C)$ of the trivial
representation of $W_J$.
\end{thm}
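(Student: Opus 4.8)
The plan is to deduce the theorem from the compatibility of Springer's construction with parabolic induction, exploiting the fact that a regular nilpotent of the Levi $\frakl_J$ has a one-point Springer fibre and therefore corresponds, under the Springer correspondence for $W_J$, to the trivial representation of $W_J$. The statement should then follow by ``inducing'' this trivial $W_J$-module up to $W$, in a manner that matches Lusztig--Spaltenstein induction of nilpotent orbits on the geometric side. As an initial consistency check I note that the multiplicities of $V$ already agree on both sides: by Frobenius reciprocity $\langle\Ind_{W_J}^W\C,V\rangle=\dim V^{W_J}=\dim V^J=s$, and this is exactly the multiplicity of $V$ in $H^*(\calB_e)$ obtained by setting $q=1$ in \eqref{exp-defn}.

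To make this precise I would use the Grothendieck--Springer resolution $\tilde{\g}=\{(x,\frakb'):x\in\frakb'\}$ with $\mu:\tilde{\g}\to\g$, so that $\mu^{-1}(e)=\calB_e$ and, by proper base change, the stalk of $R\mu_*\underline{\C}$ at $e$ recovers $H^*(\calB_e)$ as an ungraded $W$-module. One factors $\mu=\mu^J\circ p$ through the type-$J$ partial resolution $\tilde{\g}^J=\{(x,\frakp'):\frakp'\text{ of type }J,\ x\in\frakp'\}$, where $p:\tilde{\g}\to\tilde{\g}^J$ forgets the Borel and remembers only the enclosing parabolic of type $J$. The fibres of $p$ are flag varieties of the Levi $L_J$, and along them $Rp_*\underline{\C}$ carries the $W_J$-Springer action; Lusztig's extension of the $W$-action to $R\mu_*\underline{\C}=R\mu^J_*(Rp_*\underline{\C})$ realizes the latter as the parabolic induction of the $L_J$-Springer sheaf, so that the $W$-isotypic decomposition is obtained from the $W_J$-isotypic decomposition by applying $\Ind_{W_J}^W$. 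For $e$ regular nilpotent in $\frakl_J$ the relevant $L_J$-Springer fibre is a single point, so the fibrewise complex is $\underline{\C}$ with trivial $W_J$-action. By Lusztig--Spaltenstein the $G$-orbit of $e$ is the one induced from the regular nilpotent orbit of $\frakl_J$, and assumption \eqref{ass1} places $e$ in the relevant dense stratum of this induction; hence the stalk of the induced complex at $e$ computes $\Ind_{W_J}^W(\C)$, and taking stalks at $e$ yields the claimed isomorphism.

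The crux, and what I expect to be the main obstacle, is the $W$-equivariant compatibility asserted in the previous step: Springer's $W$-action is defined through the monodromy of the covering $\mu$ over the regular semisimple locus $\g^{rs}$, and one must show that, along the factorization through $\tilde{\g}^J$, this action decomposes as $\Ind_{W_J}^W$ of the fibrewise $W_J$-action. Establishing this is the heart of the matter, requiring either the full strength of the decomposition theorem (matching intermediate extensions and endomorphism algebras, in the style of Borho--MacPherson) or a careful nearby-cycles and specialization argument relating the two monodromy representations; this is precisely the analysis carried out by Lusztig \cite{lusztig}. A dimension count provides a reassuring final check: $\dim H^*(\calB_e)$ equals the Euler characteristic of $\calB_e$, i.e. the number of Borel subalgebras containing $e$, which for $e$ regular in $\frakl_J$ is $[W:W_J]=\dim\Ind_{W_J}^W\C$. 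The remaining ingredients being standard or recorded above, I would expect essentially all the difficulty to reside in the equivariance step.
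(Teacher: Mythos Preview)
The paper gives no proof of this theorem at all: it is stated purely as a citation of Lusztig's result \cite{lusztig}, and is then used as a black box (only its immediate corollary about $\dim(H^*(\calB_e)\otimes\Lambda^*V)^W$ is needed). So there is nothing in the paper to compare your argument against.

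That said, your sketch is a correct outline of the standard approach to such induction theorems: factor the Springer map through the partial resolution $\tilde{\g}^J$, identify $Rp_*\underline{\C}$ fibrewise with the Springer sheaf for the Levi, and then argue that the global $W$-action is induced from the fibrewise $W_J$-action. You have also correctly located the genuine content: the $W$-equivariant compatibility step is exactly what requires the machinery of \cite{lusztig} (or the Borho--MacPherson analysis you allude to), and you explicitly defer to that reference at that point. The consistency checks you give (multiplicity of $V$ via Frobenius reciprocity, and the Euler characteristic $[W:W_J]$) are fine and reassuring. In short, your proposal does not differ from the paper's treatment so much as expand on it; the paper simply invokes the theorem, while you have written out why one should believe it and where the difficulty lies.
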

\begin{cor}
Under the assumption \eqref{ass1}, we have 
\[ \dim(H^*(\calB_e) \otimes \Lambda^* V)^W = 2^{s}, \]
so the domain and codomain of $\psi$ have the same dimension.
\end{cor}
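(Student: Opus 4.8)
The plan is to compute $\dim(H^*(\calB_e)\otimes\Lambda^*V)^W$ by combining Lusztig's Induction Theorem (Theorem \ref{lusztig}) with the standard tensor identity for induced representations, and then to reduce the resulting computation to one inside the parabolic subgroup $W_J$, where it follows from the decomposition $V=V^J\oplus V_J$.

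First I would use Theorem \ref{lusztig}: forgetting the grading, $H^*(\calB_e)\cong\Ind_{W_J}^W(\C)$ as $W$-modules. By the projection formula for finite groups, $\Ind_{W_J}^W(\C)\otimes\Lambda^*V\cong\Ind_{W_J}^W(\Lambda^*V)$, where on the right $\Lambda^*V$ is regarded as a $W_J$-module by restriction; and by Frobenius reciprocity $(\Ind_{W_J}^W M)^W\cong M^{W_J}$ for any $W_J$-module $M$. Combining these,
\[
\dim(H^*(\calB_e)\otimes\Lambda^*V)^W=\dim(\Lambda^*V)^{W_J}.
\]

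Next I would evaluate $(\Lambda^*V)^{W_J}$ using the $W_J$-stable decomposition $V=V^J\oplus V_J$, in which $W_J$ acts trivially on the $s$-dimensional space $V^J$ and acts on $V_J$ as its reflection representation. This gives $\Lambda^*V\cong\Lambda^*V^J\otimes\Lambda^*V_J$ as $W_J$-modules, hence $(\Lambda^*V)^{W_J}\cong\Lambda^*V^J\otimes(\Lambda^*V_J)^{W_J}$, where $\dim\Lambda^*V^J=2^s$. So it remains to show $(\Lambda^*V_J)^{W_J}$ is one-dimensional. Writing $W_J$ as the direct product of the irreducible Weyl groups attached to the connected components of $J$, with reflection representations $V_1,\dots,V_k$ (so that $V_J=V_1\oplus\cdots\oplus V_k$), we get $(\Lambda^*V_J)^{W_J}\cong\bigotimes_{a}(\Lambda^*V_a)^{W_a}$; for each $a$ the exterior powers of $V_a$ are pairwise inequivalent irreducible $W_a$-modules (as recalled in the introduction), of which only $\Lambda^0V_a=\C$ is trivial, so each tensor factor is one-dimensional. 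This yields $\dim(H^*(\calB_e)\otimes\Lambda^*V)^W=2^s$. Finally, since $\Lambda^1V=V$ is irreducible and self-dual, $\dim(H^*(\calB_e)\otimes V)^W$ equals the multiplicity $s$ of $V$ in $H^*(\calB_e)$ by \eqref{exp-defn}, so the domain $\Lambda^*((H^*(\calB_e)\otimes V)^W)$ of $\psi$ also has dimension $2^s$.

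I do not expect a genuine obstacle here: every step is a formal manipulation with induced representations and exterior algebras, and the substantive input, Lusztig's theorem, is quoted. The one point meriting care is that the statement ``the $\Lambda^iV$ are pairwise inequivalent irreducibles'' is being applied to the parabolic subgroup $W_J$ rather than to $W$; I would justify this through the product decomposition of $W_J$ into irreducible Weyl groups above, rather than via a general assertion about Coxeter groups.
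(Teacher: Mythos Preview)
Your proof is correct and follows essentially the same route as the paper: use Lusztig's theorem and Frobenius reciprocity to reduce to computing $\dim(\Lambda^*V)^{W_J}$, then show this equals $2^s$. The only difference is in that last computation: the paper observes directly that $(\Lambda^*V)^{s_\alpha}=\Lambda^*(\ker\alpha)$ for each $\alpha\in J$ and hence $(\Lambda^*V)^{W_J}=\Lambda^*(V^J)$, whereas you go via the tensor decomposition $\Lambda^*V\cong\Lambda^*V^J\otimes\Lambda^*V_J$ and the irreducibility of the $\Lambda^iV_a$ for each simple factor of $W_J$; both arguments are short and standard.
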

\begin{proof}
By Frobenius Reciprocity, we know that
\begin{equation}
\dim(\Ind_{W_J}^W(\C)\otimes \Lambda^* V)^W=\dim (\Lambda^* V)^{W_J}.
\end{equation}
From the fact that $(\Lambda^* V)^{s_\alpha}=\Lambda^*(\ker(\alpha))$ for
all $\alpha\in J$ one deduces $(\Lambda^* V)^{W_J}=\Lambda^*(V^J)$,
and the Corollary follows.
\end{proof}

So to prove Theorem \ref{exteriorthm},
it suffices to show that the homomorphism $\psi$ is injective.
Since the domain is an exterior algebra, this will follow if we can show that 
\begin{equation} \label{nonzero1}
\psi(\Lambda^{s}((H^*(\calB_e)\otimes V)^W))\neq 0.
\end{equation}
To be more concrete,
let $v_1,\cdots,v_\ell$ be a basis of $V$, and let $c_{i,j}\in H^{2m_i}(\calB_e)$,
for $1\leq i\leq s$, $1\leq j\leq \ell$, be such that
\[
\sum_{j=1}^n c_{1,j}\otimes v_j,\
\sum_{j=1}^n c_{2,j}\otimes v_j,\ \cdots,\
\sum_{j=1}^n c_{s,j}\otimes v_j
\]
is a basis of $(H^*(\calB_e)\otimes V)^W$. Then by definition of $\psi$,
\eqref{nonzero1} is equivalent to the statement that for some
sequence $1\leq j_1<j_2<\cdots<j_{s}\leq n$,
\begin{equation} \label{nonzero2}
\det\begin{pmatrix}
c_{1,j_1}&c_{1,j_2}&\cdots&c_{1,j_{s}}\\
c_{2,j_1}&c_{2,j_2}&\cdots&c_{2,j_{s}}\\
\vdots&\vdots&\ddots&\vdots\\
c_{s,j_1}&c_{s,j_2}&\cdots&c_{s,j_{s}}
\end{pmatrix}\neq 0.
\end{equation} 
To handle such a determinant, we need a better grasp of the cohomology ring
$H^*(\calB_e)$ in which it lives.

Using the $W$-equivariant isomorphism of $V$ with its dual, we will identify
the symmetric algebra $S^* V$ with the ring of polynomial functions on $V$. It is well
known that the invariant subring $(S^* V)^W$ is freely generated by $\ell$ homogeneous
polynomials, called fundamental invariants for $W$.
The coinvariant algebra $C^*(W)$ of $W$ is the quotient $S^* V/I$, where $I$ is the
ideal generated by these fundamental invariants.

Now there is a canonical (degree-doubling) 
$W$-equivariant algebra homomorphism $S^* V\to H^*(\calB)$ which identifies
$H^*(\calB)$ with $C^*(W)$ (see \cite[\S5]{shoji}). Composing this with the
natural homomorphism $H^*(\calB)\to H^*(\calB_e)$, we obtain a $W$-equivariant
homomorphism $\varphi:S^*V\to H^*(\calB_e)$ (see \cite[Theorem 1.1]{hottaspringer}). 
Note that the image of $\varphi$
is contained in the subspace $H^*(\calB_e)^{A(e)}$ of invariants for
the component group of the centralizer of $e$ in the adjoint group of $\g$; in
particular, $\varphi$ is not surjective in general. However, it may happen that
the induced map $(S^*V\otimes V)^W\to(H^*(\calB_e)\otimes V)^W$
is surjective even if $\varphi$ is not.
We will see that this occurs under the
assumptions of Theorem \ref{exteriorthm}, which means that a calculation with
polynomials on $V$ suffices to prove \eqref{nonzero2}. 

Henceforth we let $K\subseteq\Pi$ be a subset satisfying conditions
(1) and (2) of Theorem \ref{exteriorthm}. Note that condition (1) entails $|K|=s$.
Choose a basis $v_1,v_2,\cdots,v_\ell$ of $V$ such that $v_1,\cdots,v_{s}$ is a basis
of $V_K$.
Since the exterior derivative $S^*V\to S^*V\otimes V:f\mapsto \sum 
\frac{\partial f}{\partial v_j}\otimes v_j$ is $W$-equivariant, we have the
following $s$ elements of $(H^*(\calB_e)\otimes V)^W$:
\[
\sum_{j=1}^n \varphi\!\left(\frac{\partial f_1}{\partial v_j}\right)\otimes v_j,\
\sum_{j=1}^n \varphi\!\left(\frac{\partial f_2}{\partial v_j}\right)\otimes v_j,\ \cdots,\
\sum_{j=1}^n \varphi\!\left(\frac{\partial f_{s}}{\partial v_j}\right)\otimes v_j.
\]
We can prove simultaneously that these form a basis of $(H^*(\calB_e)\otimes V)^W$,
and that \eqref{nonzero2} holds (with $j_i=i$), by proving the single fact that 
$\varphi(\Delta)\neq 0$,
where 
\[
\Delta=\det\begin{pmatrix}
\frac{\partial f_1}{\partial v_1}&\frac{\partial f_1}{\partial v_2}&\cdots
&\frac{\partial f_1}{\partial v_{s}}\\
\frac{\partial f_2}{\partial v_1}&\frac{\partial f_2}{\partial v_2}&\cdots
&\frac{\partial f_2}{\partial v_{s}}\\
\vdots&\vdots&\ddots&\vdots\\
\frac{\partial f_{s}}{\partial v_1}&\frac{\partial f_{s}}{\partial v_2}&\cdots
&\frac{\partial f_{s}}{\partial v_{s}}\\
\end{pmatrix}.
\]
Since $v_1,\cdots,v_{s}$ span the reflection representation of $W_K$, we have
$w\Delta=\varepsilon(w)\Delta$ for all $w\in W_K$. This forces
$\Delta$ to be divisible by the polynomial $\pi_K=\prod_{\beta\in\Phi_K^+}\beta$.
Condition (1) tells us that on restricting to $V_K$,
$\Delta$ becomes the Jacobian matrix of the fundamental invariants of $W_K$,
which is well known to be a nonzero scalar multiple of the restriction to $V_K$ of
$\pi_K$ (see \cite{steinberg}). So $\Delta$ is a nonzero scalar multiple of $\pi_K$,
and it suffices to prove that $\varphi(\pi_K)\neq 0$. 

By condition (2), we may suppose that $e$ lies in the nilradical of $\frakp_K$.
Then any Borel subalgebra contained in $\frakp_K$ must contain $e$, so
we have an inclusion $\calB^K\hookrightarrow\calB_e$, where $\calB^K$ denotes the variety
of Borel subalgebras contained in $\frakp_K$, which can be identified with the
flag variety of $\frakl_K$. Hence it suffices to prove that
$\pi_K$ is not in the kernel of the composition $S^*V\to H^*(\calB)\to H^*(\calB^K)$.
But this composition is the canonical homomorphism identifying $C^*(W_K)$ with
$H^*(\calB^K)$, which maps $\pi_K$ to a generator of the top-degree cohomology of 
$\calB^K$ (see \cite[Proposition 1.4]{hottaspringer}, which uses exactly this argument
in the case when $e$ lies in the Richardson orbit of $\frakp_K$). This completes
the proof of Theorem \ref{exteriorthm}.
\section{Proofs of Propositions \ref{typeaprop}--\ref{typecprop}}

\noindent
\textit{Proof of Proposition \ref{typeaprop}.}\
In type A, we have $\g=\fraksl_{\ell+1}$ and $W=S_{\ell+1}$. 
The simple roots are given by $\alpha_i=x_i-x_{i+1}$, where
$x_1,\cdots,x_{\ell+1}$ denote coordinate functions on $V$ which are permuted by $W$,
and which satisfy $x_1+\cdots+x_{\ell+1}=0$. The orbit of the nilpotent $e$ is
determined by its Jordan type
$\lambda$, which can be an arbitrary partition of $\ell+1$: the element $e$ is then automatically parabolic of type
\[ J=\Pi\setminus\{\alpha_{\lambda_1+\lambda_2+\cdots+\lambda_i}\,|\,1\leq 
i<\ell(\lambda)\},\text{ so }r=\ell-\ell(\lambda)+1,\,s=\ell(\lambda)-1. \]
In this case, $\mathcal{A}^J$ is an arrangement of type A$_{s}$,
so $m_i=i$ for $i=1,\cdots,s$. We define $f_i$ to be
the usual $S_{\ell+1}$-invariant polynomial of degree $i+1$, namely the 
$(i+1)$th elementary symmetric polynomial in $x_1,\cdots,x_{\ell+1}$. Let
\[ K=\{\alpha_{r+1},\alpha_{r+2},\cdots,
\alpha_{\ell}\}. \]
Then $V_K$ is defined by the equations $x_i=0$ for all $i\leq r$, so
the restrictions of the $f_i$'s to $V_K$ are the 
elementary symmetric polynomials in $x_{r+1},\cdots,
x_{\ell+1}$, and condition
(1) is satisfied. With this $K$, condition (2) amounts to saying that there is 
a partial flag in the natural representation,
\[ 0=U_0\subset U_1\subset U_2\subset\cdots\subset U_{r}\subset\C^{\ell+1}, \]
such that $eU_i\subseteq U_{i-1}$ and $\dim U_i=i$ for
all $i\geq 1$, and $e(\C^{\ell+1})\subseteq U_r$. This is true: we can take
$U_r=e(\C^{\ell+1})$, and then let $U_1\subset\cdots\subset U_{r}$ be
any $e$-stable complete flag in $U_r$.
\hfill$\Box$

\medskip

\noindent
\textit{Proof of Proposition \ref{typebprop}.}\
In type B, we have $\g=\frako_{2\ell+1}$. The simple roots are given by
$\alpha_i=x_i-x_{i+1}$ for $i\leq \ell-1$ and $\alpha_\ell=x_\ell$,
where $x_1,\cdots,x_\ell$ denote coordinate functions on
$V$ which are permuted up to sign by $W$. We are assuming that $e$ is parabolic
of type $J$, for some subset $J\subseteq\Pi$.
There is some nonnegative integer $m\leq \ell$ and composition 
$\lambda$ of $\ell-m$ such that
\[ J=\Pi\setminus\{\alpha_{\lambda_1+\lambda_2+\cdots+\lambda_i}\,|\,1\leq 
i\leq\ell(\lambda)\},\text{ so }r=\ell-\ell(\lambda),\,s=\ell(\lambda). \]
The orbit containing the regular nilpotents in $\frakl_J$ consists of those 
whose Jordan type in the natural
representation $\C^{2\ell+1}$ is the partition obtained by rearranging 
$\lambda_1,\lambda_1,\lambda_2,\lambda_2,\cdots,
\lambda_{s},\lambda_{s},2m+1$. 
Grouping the Jordan blocks together as appropriate, we get
a direct sum decomposition $\C^{2\ell+1}=U\oplus U'\oplus U''$ where
$U$ and $U'$ are $(\ell-m)$-dimensional $e$-stable isotropic subspaces on each of
which the Jordan type of $e$ is the partition obtained by rearranging $\lambda$, 
$U''$ is a $(2m+1)$-dimensional $e$-stable subspace on which $e$ has a single
Jordan block, and $U''$ is
perpendicular to $U\oplus U'$ for the nondegenerate symmetric form.

The arrangement $\mathcal{A}^J$
is of type B$_{s}$, so $m_i=2i-1$ for $i=1,\cdots,s$.
We define $f_i$ to be the usual $W$-invariant polynomial of degree $2i$, namely
the $i$th elementary symmetric polynomial in $x_1^2,\cdots,x_\ell^2$. Let
\[ K=\{\alpha_{r+1},\alpha_{r+2},\cdots,
\alpha_{\ell}\}. \]
Then $V_K$ is defined by the equations $x_i=0$ for $i\leq r$, so the
restrictions of the $f_i$'s to $V_K$ are the elementary symmetric polynomials in
$x_{r+1}^2,\cdots,x_\ell^2$, and condition (1) is satisfied.
With this $K$, condition (2) amounts to saying that there is an isotropic partial
flag in the natural representation,
\[ 0=U_0\subset U_1\subset\cdots\subset U_{r}\subset\C^{2\ell+1}, \]
such that $eU_i\subseteq U_{i-1}$ and $\dim U_i=i$ 
for all $i\geq 1$, and $e(U_{r}^\perp)\subseteq
U_{r}$.  This is true: if we define
$U_{r}$ to be
$e(U)\oplus U'''$ where $U'''$ is the unique $e$-stable $m$-dimensional subspace of $U''$, then $U_r^\perp=U\oplus\ker(e|_{U'})\oplus e^{-1}(U''')$, so $e(U_r^\perp)=U_r$. 
We can then let $U_1\subset\cdots\subset U_{r}$ be any $e$-stable complete
flag in $U_{r}$. 
\hfill$\Box$

\medskip

\noindent
\textit{Proof of Proposition \ref{typecprop}.}\
In type C, we have $\g=\fraksp_{2\ell}$ with natural representation $\C^{2\ell}$; the
description of the simple roots is the same as in type B, except that 
$\alpha_\ell=2x_\ell$.
The proof is identical to that of Proposition \ref{typebprop}, but with
the dimension of $U''$
(which equals the size of a Jordan block of $e$) changed from $2m+1$ to $2m$,
so in the last step one has $U_r^\perp=U\oplus\ker(e|_{U'})\oplus U'''$, still
implying $e(U_r^\perp)\subseteq U_r$. 
\hfill$\Box$

\end{document}